\newtheorem{thm}{Theorem}[section]
\newtheorem{lem}[thm]{Lemma}
\theoremstyle{definition}
\newtheorem{dfn}[thm]{Definition}
\newtheorem{exm}[thm]{Example}
\theoremstyle{remark}
\newtheorem{rem}[thm]{Remark}
\newcommand{\exmsymbol}{\hfill$\circ$}
\newcommand{\cset}{\mathds{C}}
\newcommand{\nset}{\mathds{N}}
\newcommand{\rset}{\mathds{R}}
\newcommand{\tset}{\mathds{T}}
\newcommand{\zset}{\mathds{Z}}
\newcommand{\cov}{\mathrm{cov}}
\newcommand{\diff}{\mathrm{d}}
\newcommand{\divv}{\mathrm{div}\,}
\newcommand{\rot}{\mathrm{rot}\,}
\newcommand{\cO}{\mathcal{O}}
\newcommand{\cS}{\mathcal{S}}
\newcommand{\cZ}{\mathcal{Z}}
\newcommand{\otex}[2]{\overset{\parbox{5em}{\scriptsize\centering\text{#1}}}{#2}}
\journal{ArXiv} 
\begin{document}

\begin{frontmatter}

\title{Schwartz Function Valued Solutions of the Euler and the Navier--Stokes Equations}

\author{Philipp J.\ di Dio}
\address{Department of Mathematics and Statistics, University of Konstanz, Universit\"atsstra{\ss}e 10, D-78464 Konstanz, Germany}
\ead{philipp.didio@uni-konstanz.de}

\begin{abstract}
We prove the existence of a solution for the second order system of partial differential equations $\partial_t f = \nu\cdot\Delta f + g\cdot\nabla f + h\cdot f + k$ by a Montel space version of Arzel\`a--Ascoli and bound all Schwartz semi-norms. We find that for the Euler and the Navier--Stokes equations the vorticity remains a Schwartz function as long as the classical solution exists. Our approach is not affected by viscosity. It treats the hyperbolic Euler and the parabolic Navier--Stokes equation simultaneously.
\end{abstract}

\begin{keyword}
Euler equation\sep Navier--Stokes equation\sep vorticity\sep Burgers' equation\sep Montel space\sep Schwartz function\sep breakdown criteria\sep time-dependent moments
\MSC[2020] 35Q30\sep 76D03\sep 76D05
\end{keyword}

\end{frontmatter}



\section{Introduction}

The motion of (incompressible) fluids in $\rset^n$ or $\tset^n := \rset^n/\zset^n$ ($n=2,3$) are described by the \emph{Euler} ($\nu = 0$) and \emph{Navier--Stokes} ($\nu > 0$) \emph{equations}
\begin{subequations}\label{eq:ens}
\begin{align}
\partial_t u(x,t) &= \nu\Delta u(x,t) - u\cdot\nabla u(x,t) -\nabla p(x,t) + F(x,t)\\
\divv u(x,t) &= 0
\intertext{with initial conditions}
u(x,t_0) &= u_0(x).
\end{align}
\end{subequations}
Here $x\in\rset^n$ or $\tset^n$ is the position vector and $t\geq t_0$ is the time; $t_0\in\rset$ is the initial starting time and without loss of generality $t_0=0$. Then $u(x,t) = (u_1(x,t),\dots,u_n(x,t))^t$ is the velocity field of the fluid, $p(x,t)$ is the pressure, and $F(x,t) = (F_1(x,t),\dots,F_n(x,t))^t$ are externally applied forces  \cite{feffer06}. Reasonable initial conditions \cite{feffer06} are
\[u_0=(u_{0,1},\dots,u_{0,n})^t\in\cS(\rset^n,\rset^n)\quad\text{resp.}\quad C^\infty(\tset^n,\tset^n),\]
i.e., all $u_{0,i}$ shall be Schwartz functions
\[\cS(\rset^n) := \{f\in C^\infty(\rset^n) \,|\, \|x^\alpha\cdot \partial^\beta f(x)\|_\infty<\infty\ \text{for all}\ \alpha,\beta
\in\nset_0^n\},\]
resp.\ smooth periodic functions. A physically reasonable solution $u$ and $p$ of (\ref{eq:ens}) must fulfill the smoothness condition
\[u_1,\dots,u_n,p\in C^\infty(\rset^n\times [0,\infty))\quad\text{resp.}\quad C^\infty(\tset^n\times [0,\infty))\]
and the bounded energy condition
$\int |u(x,t)|^2~\diff x < C$
for all $t\geq t_0$ \cite{feffer06}.

With $F=0$, taking the curl of (\ref{eq:ens}) gives
\begin{equation}\label{eq:ensRot}
\begin{split}
\partial_t \omega(x,t) &= \nu\Delta \omega(x,t) - u(x,t)\cdot\nabla \omega(x,t) + \omega(x,t)\cdot\nabla u(x,t)\\
\omega(x,t_0) &= \omega_0(x) := \rot u_0(x)
\end{split}
\end{equation}
with the \emph{vorticity} $\omega(x,t) := \rot u(x,t)$ ($=\mathrm{curl}\, u(x,t) = \nabla\times u(x,t)$), and we have
\begin{equation}\label{eq:hDef}
\omega\cdot\nabla u = (\omega_1\partial_1 + \omega_2 \partial_2 + \omega_3\partial_3)\!\! \begin{pmatrix} u_1\\ u_2\\ u_3\end{pmatrix}  = \begin{pmatrix}
\partial_1 u_1 & \partial_2 u_1 & \partial_3 u_1\\ \partial_1 u_2 & \partial_2 u_2 & \partial_3 u_2\\ \partial_1 u_3 & \partial_2 u_3 & \partial_3 u_3\end{pmatrix}\!\! \begin{pmatrix} \omega_1\\ \omega_2\\ \omega_3\end{pmatrix} = \nabla u\cdot\omega.
\end{equation}
More on the Euler and the Navier--Stokes equations can be found e.g.\ in 
\cite{euler57,navier27,stokes49,oseen11,oseen27,leray33,leray34a,leray34b,hopf51,ladyzh53,
kisele57,ladyzh63,ladyzh69,caffar82,beale84,temam84,wahl85,constan88,kreiss89,dobrok94,temam95,
temam01,foias01,majda02,lemarie02,darrig02,ladyzh03,brando04,tao06,bahour11,boyer13,
lemari13,robinson16} and references therein.

In this paper we investigate Schwartz function valued vorticity solutions of the Euler and Navier--Stokes equations. Since the initial values fulfill $u_0\in\cS(\rset^3,\rset^3)$ \cite{feffer06} it is interesting if the solution $u$ of (\ref{eq:ens}) resp.\ the vorticity $\omega$ of (\ref{eq:ensRot}) stay in $\cS(\rset^3,\rset^3)$ or how they leave this space.

The spatial decay (asymptotics) of $u$ and $\omega$ has been investigated before. A classical result is that unless the \emph{Dobrokhotov--Shafarevich conditions} \cite{dobrok94} are fulfilled, the spacial decay in $u$ does not decay faster than $O(|x|^{-4})$. This especially covers the instantaneous breakdown of $u$ being a Schwartz function. For further studies see e.g.\ \cite{brando01,brando02,brando04a,bae09,brando09}. For the vorticity $\omega$ such a spreading was never observed, see e.g.\ \cite[Ch.\ 4.11]{lemari13}. In \cite[Prop.\ 3.1]{brando04} it was shown that for the Navier--Stokes equation the vorticity remains a Schwartz function for small times. We also want to mention the works \cite{ting83,danchin96,miyaka00,meyer01,gallay02,mcowen17,sultan20}.

In this paper we show that the vorticity for the Euler and the Navier--Stokes equation remains a Schwartz function as long as the smooth solution exists (\Cref{thm:ensSchwartz}). Our approach is not affected by the Laplace operator and treats the Euler and the Navier--Stokes equations at the same time. It also covers the anisotropic Laplace operator $\nu\cdot\Delta := \nu_1\partial_1^2 + \dots + \nu_n\partial_n^2$ with $\nu = (\nu_1,\dots,\nu_n)\in [0,\infty)^n$.

Let $n,m\in\nset$. In what follows $\|f\|_\infty := \sup_{x\in\rset^n} |f(x)|$ is the supremum-norm on $\rset^n$ and $x^\alpha := x_1^{\alpha_1}\cdots x_n^{\alpha_n}$, $\partial^\alpha := \partial_1^{\alpha_1}\cdots\partial_n^{\alpha_n}$, and $|\alpha| := \alpha_1 + \dots + \alpha_n$ are multi-index notations with $\alpha=(\alpha_1,\dots,\alpha_n)\in\nset_0^n$. By $T^* \leq\infty$ we denote the maximal time a classical solution of a PDE exists, i.e., the classical solution exists for all $t\in [0,T^*)$ and $T^*$ is maximal with this property. We denote by
\[C_b^\infty(\rset^n,\rset^m) := \{f\in C^\infty(\rset^n,\rset^m) \,|\, \|\partial^\alpha f\|_\infty < \infty\ \text{for all}\ \alpha\in\nset_0^n\}\]
the set of all \emph{smooth bounded functions} and by
\[\cS(\rset^n,\rset^m) := \{f\in C^\infty(\rset^n,\rset^m) \,|\, \|x^\alpha\cdot\partial^\beta f(x)\|_\infty < \infty\ \text{for all}\ \alpha,\beta\in\nset_0^n\}\]
we denote the set of all \emph{Schwartz functions}. Here, for functions $f=(f_1,\dots,f_n)^t$  by $|f|$ we denote $|f|(x) := \sqrt{f_1^2(x) + \dots + f_n^2(x)}$ and $\|x^\alpha f(x)\|_\infty := \sup_{x\in\rset^n} |x^\alpha|\cdot |f|(x)$ for all $\alpha\in\nset_0^n$.

We study (\ref{eq:ens}) and (\ref{eq:ensRot}) via the initial value problem
\begin{equation}\label{eq:generalPDE}
\begin{split}
\partial_t f(x,t) &= \nu\Delta f(x,t) +  g(x,t)\cdot\nabla f(x,t) + h(x,t)\cdot f(x,t) + k(x,t)\\
f(x,0) &= f_0(x),
\end{split}
\end{equation}
and the functions $g(x,t) = (g_1(x,t),\dots$, $g_n(x,t))^t$, $h(x,t) = (h_{i,j}(x,t))_{i,j=1}^m$, and $k(x,t) = (k_1(x,t),\dots,k_m(x,t))^t$ with $n,m\in\nset$ are known vector resp.\ matrix functions.

To show the existence of solutions of (\ref{eq:generalPDE}) we split it into the following four simpler parts and glue them together in a Trotter type \cite{trotter59} fashion.

\begin{exm}[heat equation]\label{exm:heat}
Let $\nu>0$ and for all $t>0$ let $\Theta_{\nu,t}(x) := \frac{1}{\sqrt{\pi\nu t}}\cdot\exp\big(-\frac{x^2}{4\nu t}\big)$ be the heat kernel. Then for $f_0\in\cS(\rset)$ the convolution
\begin{equation}\label{eq:heatExact}
f(x,t):= (\Theta_{\nu,t}*f_0)(x) = \int_{y\in\rset} f_0(x-y)\cdot \Theta_{\nu,t}(y) ~\diff y \quad\in C^1([0,\infty),\cS(\rset,\rset))
\end{equation}
solves the initial value problem
\begin{align*}
\partial_t f(x,t) &= \nu\cdot\Delta f(x,t) & \text{on}\ \rset&\times [0,\infty)\\
f(x,0) &= f_0(x) & \text{on}\ \rset&.\tag*{$\circ$}
\end{align*}
\end{exm}

In higher dimensions for the anisotropic Laplace operator $\nu\cdot\Delta = \nu_1\partial_1^2 + \nu_n\partial_n^2$ we set $\Theta_{\nu,t} := \Theta_{\nu_1,t}^{(1)}\cdots \Theta_{\nu_n,t}^{(n)}$ where $\Theta_{\nu_i,t}^{(i)}$ is the one-dimensional heat kernel acting resp.\ depending only on the $x_i$-coordinate.

\begin{exm}[transport equation]\label{exm:tranport}
Let $f_0\in\cS(\rset)$ and $g\in C([0,\infty),\rset)$. Then
\begin{equation}\label{eq:transportExact}
f(x,t) := f_0\left(x+\int_{0}^t g(s)~\diff s\right) \quad\in C^1([0,\infty),\cS(\rset,\rset))
\end{equation}
is a solution of the initial value problem
\begin{align*}
\partial_t f(x,t) &= g(t)\cdot \nabla f(x,t) & \text{on}\ \rset&\times [0,\infty)\\
f(x,0) &= f_0(x) & \text{on}\ \rset&. \tag*{$\circ$}
\end{align*}
\end{exm}

Note, that if $g$ also depends on $x$, then the previous simple formula does not hold but serves for the ansatz
\begin{equation}\label{eq:transportAnsatz}
f(x,t) \approx f_0\left(x + \int_0^t g(x,s)~\diff s\right) \quad\in C^1([0,\infty),\cS(\rset,\rset))
\end{equation}
for small times $t$.

\begin{exm}[``stretching equation'']\label{exm:stretching}
Let $f_0\in\cS(\rset)$ and $h\in C([0,\infty),C(\rset,\rset))$. Then
\begin{equation}\label{eq:matrix}
f(x,t) := \exp\left( \int_0^t h(x,s)~\diff s \right)\cdot f_0(x) \quad\in C^1([0,\infty),\cS(\rset,\rset))
\end{equation}
is a solution of the initial value problem
\begin{align*}
\partial_t f(x,t) &= h(x,t)\cdot f(x,t) & \text{on}\ \rset&\times [0,\infty)\\
f(x,0) &= f_0(x) & \text{on}\ \rset&. \tag*{$\circ$}
\end{align*}
\end{exm}

\begin{exm}[``addition equation'']\label{exm:addition}
Let $k \in C([0,\infty),\cS(\rset))$ and $f_0\in\cS(\rset)$. Then
\begin{equation}\label{eq:additionExact}
f(x,t) = f_0(x) + \int_0^t k(x,s)~\diff s \quad\in C^1([0,\infty),\cS(\rset,\rset))
\end{equation}
solves the initial value problem
\begin{align*}
\partial_t f(x,t) &= k(x,t) & \text{on}\ \rset&\times [0,\infty)\\
f(x,0) &= f_0(x) & \text{on}\ \rset&.\tag*{$\circ$}
\end{align*}
\end{exm}

Each of the initial value problems $\partial_t f = A_i f$, $i=1,\dots,4$, in Examples \ref{exm:heat} to \ref{exm:addition} gives a time-evolution or in higher dimensions at least an approximate time evolution $f(x,t) = E_i(t,t_0)f_0$. In all four time evolutions, when $f_0\in\cS(\rset)$, then also $f=E_i(\,\cdot\,,t_0)f_0\in C([0,\infty),\cS(\rset,\rset))$. Our aim is to show that (\ref{eq:generalPDE}) with $f_0\in\cS(\rset^n,\rset^m)$ also possesses such a Schwartz valued solution and to give explicit bounds on all semi-norms. The four (approximate) solutions $E_i(t,t_0)f_0$ will be glued together in the Trotter fashion \cite{trotter59} by using a Schwartz valued version of the Arzel\`a--Ascoli Theorem (\Cref{lem:arzelaascoli}), i.e.,
\begin{equation*}
f(\,\cdot\,,t) = \lim_{N\to\infty} E_4(t,{\textstyle\frac{N-1}{N}}t) \cdots E_1(t,{\textstyle\frac{N-1}{N}}t) E_4({\textstyle\frac{N-1}{N} t,\frac{N-2}{N}}t)\cdots E_1({\textstyle\frac{1}{N}t,0})f_0
\end{equation*}
where the convergence is controlled in the Schwartz space $\cS(\rset^n,\rset^m)$.

The paper is structured as follows. In \Cref{sec:ArzelaAscoli} we give for completeness of the paper the Schwartz function valued version of the Arzel\`a--Ascoli Theorem (\Cref{lem:arzelaascoli}). In \Cref{sec:fNdfn} the family $\{f_N\}_{N\in\nset}$ of approximate solutions of (\ref{eq:generalPDE}) is defined. In \Cref{sec:cover} the cover $\cov_R f$ of a function $f$ is introduced, i.e., $|f|\leq \cov_R f$. It is used in \Cref{sec:main} to prove the main theorem (\Cref{thm:main}). \Cref{thm:main} is applied in \Cref{sec:burgers} to Burgers' equation and in \Cref{sec:ens} to the Euler and the Navier--Stokes equations.

\section{The Schwartz Function Valued Arzel\`a--Ascoli Theorem}
\label{sec:ArzelaAscoli}

A set $M\subset\cS(\rset^n)$ is bounded if for all $\alpha,\beta\in\nset_0^n$ there are $C_{\alpha,\beta}>0$ with
\[\sup_{f\in M} \|x^\alpha\cdot\partial^\beta f(x)\|_\infty\ \leq\ C_{\alpha,\beta}\ <\ \infty.\]
$\cS(\rset^n)$ is a complete Montel space and every bounded set is relatively compact.

In the proof of the Arzel\`a--Ascoli Theorem it is crucial that the continuous functions are (real- or) complex-valued to apply the Bolzano--Weierstra{\ss} Theorem since $\rset$ and $\cset$ have the Heine--Borel property: Every bounded sequence has a convergent subsequence resp.\ bounded and closed sets are compact. But every Montel space also has the Heine--Borel property, i.e., the classical proof of the Arzel\`a--Ascoli Theorem \cite{arzela82,ascoli83,arzela95}, see e.g.\ \cite[pp.\ 85--86]{yosida68}, can be literally used for $\cS(\rset^n)$. While this was known before, for the sake of completeness of the paper and to make it self-contained we briefly state and prove the result.

\begin{lem}[$\cS(\rset^n)$-valued version of Arzel\`a--Ascoli]\label{lem:arzelaascoli}
Let $n,m\in\nset$, $T>0$, and $\{f_N\}_{N\in\nset}\subset C([0,T],\cS(\rset^n,\rset^m))$. Assume that
\begin{enumerate}[\bfseries (i)]
\item $\sup_{N\in\nset, t\in [0,T]} \|x^\alpha\cdot\partial_x^\beta f_N(x,t)\|_\infty < \infty$ for all $\alpha,\beta\in\nset_0^n$, and

\item $\{f_N\}_{N\in\nset}$ is equi-continuous, i.e., for all $\varepsilon>0$ exists $\delta=\delta(\varepsilon) > 0$ such that for all $N\in\nset$ we have
\[|t-s|<\delta \quad\Rightarrow\quad \|f_N(x,t) - f_N(x,s)\|_\infty \leq \varepsilon.\]
\end{enumerate}
Then $\{f_N\}_{N\in\nset}$ is relatively compact in $C([0,T],\cS(\rset^n,\rset^m))$.
\end{lem}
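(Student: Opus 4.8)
The plan is to transcribe the classical Arzel\`a--Ascoli argument (as in \cite[pp.\ 85--86]{yosida68}), substituting the Heine--Borel property of the Fr\'echet--Montel space $\cS(\rset^n)^m$ for that of $\cset$, and to close at one point the gap between the sup-norm equicontinuity of assumption ii) and the full Schwartz topology by a compactness argument rather than by interpolation inequalities. Throughout I equip $\cS(\rset^n)^m$ with the translation-invariant Fr\'echet metric $d(f,g) := \sum_{\alpha,\beta} 2^{-|\alpha|-|\beta|}\, p_{\alpha,\beta}(f-g)/(1+p_{\alpha,\beta}(f-g))$ induced by the seminorms $p_{\alpha,\beta}(f) := \|x^\alpha\cdot\partial^\beta f(x)\|_\infty$, so that $d$-convergence is exactly convergence in every seminorm; the $C^\infty(K)^m$-case is identical with $\alpha=0$ and the analogous seminorms.

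First I would record that the orbit set $\cB := \{f_N(\cdot,t) : N\in\nset,\ t\in[0,T]\}$ is bounded in $\cS(\rset^n)^m$ by assumption i), hence relatively compact by the Heine--Borel property of the Montel space; write $\cK := \overline{\cB}$ for its compact (and metrizable) closure, so that every $f_N$ is a continuous map $[0,T]\to\cK$. Next, choosing a countable dense set $\{t_k\}_{k\in\nset}\subset[0,T]$, I would extract from $\{f_N(\cdot,t_1)\}_N\subset\cK$ a convergent subsequence and then diagonalize over $k$, producing a single subsequence $\{f_{N_j}\}_j$ for which $f_{N_j}(\cdot,t_k)$ converges in $\cS(\rset^n)^m$ for every $k$.

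The step I expect to be the crux is upgrading the sup-norm equicontinuity ii) to equicontinuity in the metric $d$, and this is exactly where the Montel structure does the work. I would argue by contradiction: if $d$-equicontinuity failed there would be $\varepsilon_0>0$ and indices $N_j,t_j,s_j$ with $|t_j-s_j|\to 0$ but $d\big(f_{N_j}(\cdot,t_j),f_{N_j}(\cdot,s_j)\big)\geq\varepsilon_0$. The differences $g_j := f_{N_j}(\cdot,t_j)-f_{N_j}(\cdot,s_j)$ are bounded in $\cS(\rset^n)^m$ by i) while $\|g_j\|_\infty\to 0$ by ii). Relative compactness of the bounded set $\{g_j\}$ then forces any sub-subsequence to have a $d$-limit, which must equal $0$ because $d$-convergence implies sup-norm convergence and $\|g_j\|_\infty\to 0$; hence $d(g_j,0)\to 0$, contradicting $d(g_j,0)\geq\varepsilon_0$. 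In other words, on a bounded set the Schwartz topology is already controlled by the single seminorm $\|\cdot\|_\infty$, so no interpolation estimate is needed.

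With $d$-equicontinuity in hand, a standard $3\varepsilon$-argument finishes the proof: given $t\in[0,T]$ I would pick $t_k$ with $|t-t_k|$ small, bound $d(f_{N_i}(\cdot,t),f_{N_i}(\cdot,t_k))$ and $d(f_{N_j}(\cdot,t),f_{N_j}(\cdot,t_k))$ uniformly by equicontinuity, and make $d(f_{N_i}(\cdot,t_k),f_{N_j}(\cdot,t_k))$ small using convergence at $t_k$; the triangle inequality then shows $\{f_{N_j}\}_j$ is uniformly Cauchy in $t$, i.e.\ Cauchy in $C([0,T],\cS(\rset^n)^m)$. Since $\cS(\rset^n)^m$ is complete, so is $C([0,T],\cS(\rset^n)^m)$ under the uniform metric, and the limit lies in it. As the entire argument applies verbatim to any subsequence of $\{f_N\}$, the sequence is relatively compact, and the $C^\infty(K)^m$-case runs identically because $C^\infty(K)^m$ is likewise a complete Fr\'echet--Montel space.
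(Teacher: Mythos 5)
Your proposal is correct, and its skeleton (diagonal extraction over a countable dense set of times, then a $3\varepsilon$ argument using equicontinuity) is the same as the paper's. The genuine difference is \emph{where} the $3\varepsilon$ argument is run. The paper runs it only in the sup-norm: it concludes that $f_{N_i}(x,\cdot)$ converges uniformly in $\|\cdot\|_\infty$ to some $f$, and then simply asserts that the uniform seminorm bounds of hypothesis i) pass to the limit, so that $f(\cdot,t)\in\cS(\rset^n)$; it does not explicitly establish convergence of $x^\alpha\partial^\beta f_{N_i}$ uniformly in $t$, which is what relative compactness in $C([0,T],\cS(\rset^n)^m)$ literally requires (and which is what the paper later uses, e.g.\ in (\ref{eq:convergenz})). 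You close exactly this gap: your observation that on a bounded (hence, by the Montel property, relatively compact) subset of $\cS(\rset^n)^m$ the sup-norm topology already coincides with the full Fr\'echet topology --- proved by the subsequence principle applied to $g_j=f_{N_j}(\cdot,t_j)-f_{N_j}(\cdot,s_j)$ --- upgrades the sup-norm equicontinuity of hypothesis ii) to equicontinuity in the metric $d$, after which the $3\varepsilon$ argument delivers uniform Cauchyness in $C([0,T],\cS(\rset^n)^m)$ and completeness finishes. So your route costs one extra compactness lemma but buys the conclusion in the form actually stated (and needed downstream), whereas the paper's write-up leaves the passage from sup-norm convergence to convergence in every seminorm implicit. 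Both arguments rely on the same essential input, the Heine--Borel property of the Montel spaces $\cS(\rset^n)$ and $C^\infty(K)$.
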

\begin{proof}
It is sufficient to prove the result for $m=1$. Then it holds in one component of $f_N$ and by choosing subsequences it holds in all components.

Let $\{t_k\}_{k\in\nset}\subset [0,T]$ be a dense countable subset such that for every $\varepsilon>0$ there is a $k(\varepsilon)\in\nset$ with
\[\sup_{t\in [0,T]} \inf_{1\leq k\leq k(\varepsilon)} |t-t_k| \leq \varepsilon.\]

Let $t\in [0,T]$. Since $\{f_N(\,\cdot\,,t)\}_{N\in\nset}$ is a bounded set in the complete Montel space $\cS(\rset^n)$, it has a convergent subsequence. Let $(N_{1,i})_{i\in\nset}\subseteq\nset$ be such that $(f_{N_{1,i}}(\,\cdot\,,t_1))_{i\in\nset}$ converges in $\cS(\rset^n)$. Take a subsequence $(N_{2,i})_{i\in\nset}$ of $(N_{1,i})_{i\in\nset}$ such that $(f_{N_{2,i}}(\,\cdot\,,t_2))_{i\in\nset}$ converges in $\cS(\rset^n)$. Hence, by the diagonal process of choice we get a subsequence $(f_{N_i})_{i\in\nset}$ with $N_i := N_{i,i}$ which converges in $\cS(\rset^n)$ for all $t_k$.

Let $\varepsilon>0$. By the equi-continuity of $\{f_N\}_{N\in\nset}$ there is a $\delta=\delta(\varepsilon)>0$ such that $|t-s|<\delta$ implies $\|f_N(x,t)-f_N(x,s)\|_\infty\leq\varepsilon$. Hence, for every $t\in [0,T]$ there exists a $k$ with $k\leq k(\varepsilon)$ such that
\begin{align*}
\|f_{N_{i}}(x,t) - f_{N_{j}}(x,t)\|_\infty
&\leq \|f_{N_{i}}(x,t) - f_{N_{i}}(x,t_k)\|_\infty + \|f_{N_i}(x,t_k) - f_{N_j}(x,t_k)\|_\infty\\
&\quad + \|f_{N_j}(x,t_k) - f_{N_j}(x,t)\|_\infty\\
&\leq 2\varepsilon + \|f_{N_i}(x,t_k) - f_{N_j}(x,t_k)\|_\infty.
\end{align*}
Thus $\displaystyle\lim_{i,j\to\infty} \sup_{t\in [0,T]} \|f_{N_{i}}(x,t) - f_{N_{j}}(x,t)\|_\infty \leq 2\varepsilon$ and since $\varepsilon>0$ was arbitrary we have $\displaystyle\lim_{i,j\to\infty} \sup_{t\in [0,T]} \|f_{N_{i}}(x,t) - f_{N_{j}}(x,t)\|_\infty =0$. So for every $x\in\rset^n$ the sequence $f_{N_i}(x,\,\cdot\,)$ converges uniformly on $[0,T]$ to a continuous function $f(x,\,\cdot\,)$. Hence by construction $f(\,\cdot\,,t_k)\in\cS(\rset^n)$ for all $t_k$ dense in $[0,T]$. But
\[\|x^\alpha\cdot\partial^\beta f(x,t)\|_\infty \leq \sup_{s\in [0,T], N\in\nset} \|x^\alpha\cdot\partial^\beta f_N(x,s)\|_\infty < \infty\]
for all $\alpha,\beta\in\nset_0^n$ implies $f(\,\cdot\,,t)\in\cS(\rset^n)$ for all $t\in [0,T]$.
\end{proof}

\section{The Approximate Solutions $f_N$}
\label{sec:fNdfn}

\begin{dfn}
Let $N\in\nset$ and $T>0$. A \emph{decomposition $\cZ_N$ of $[0,T]$} is a set $\cZ_N = \{t_0,t_1,\dots,t_N\}$ with
$t_0=0 < t_1 < \dots < t_N = T$
and we set $\Delta\cZ_N := \max_{i=1,\dots,N} |t_{i}-t_{i-1}|$.
\end{dfn}

We use the following type of functions.

\begin{dfn}
Let $d\in\nset_0$, $n\in\nset$, and $T>0$. We denote by
\[C^d([0,T],\cS(\rset^n))\]
all functions $f:\rset^n\times[0,T]\to\rset$ such that
\begin{enumerate}[\bfseries (i)]
\item for every $x\in\rset^n$ we have $f(x,\,\cdot\,)\in C^d([0,T],\rset)$,

\item $f(\,\cdot\,,t),\partial_t f(\,\cdot\,,t),\dots,\partial_t^d f(\,\cdot\,,t)\in\cS(\rset^n)$ for all $t\in [0,T]$, and

\item $\partial_t^i\partial_x^\alpha f \in C(\rset^n\times [0,T],\rset)$ for all $i=0,1,\dots,d$ and $\alpha\in\nset_0^n$.
\end{enumerate}
\end{dfn}

We will always denote by $\partial_t$ the time derivative and we will therefore abbreviate the spatial derivatives $\partial^\alpha = \partial_x^\alpha$. (iii) implies by the Theorem of Schwarz that the order of the derivatives $\partial_t$, $\partial_1$, \dots, $\partial_n$ can be arbitrary.

\begin{dfn}\label{dfn:E1toE4}
Let $n,m\in\nset$, $f_0\in\cS(\rset^n,\rset^m)$, and $t_0\in\rset$. We define the following time evolutions $E_1,\dots,E_4$ for all $t\in [t_0,\infty)$:
\begin{enumerate}[\bfseries (i)]
\item Let $\nu=(\nu_1,\dots,\nu_n)\in [0,\infty)^n$. Then we define
\[E_1(t,t_0)f_0 := \Theta_{\nu,t-t_0} * f_0\]
with $\Theta_{\nu,t} := \Theta_{\nu_1,t}^{(1)}\cdots \Theta_{\nu_n,t}^{(n)}$ where $\Theta_{\nu_i,t}^{(i)}$ is the one-dimensional heat kernel acting resp.\ depending only on the $x_i$-coordinate.

\item For $g=(g_1,\dots,g_n)\in C([t_0,\infty),C_b^\infty(\rset^n,\rset^n))$ we define
\[E_2(t,t_0)f_0 := f_0\left( x + \int_{t_0}^t g(\,\cdot\,,s)~\diff s\right).\]

\item For $h=(h_{i,j})_{i,j=1}^m\in C([t_0,\infty),C_b^\infty(\rset^n,\rset^{m\times m}))$ we define
\[E_3(t,t_0)f_0 := \left(1 + \int_{t_0}^t h(\,\cdot\,,s)~\diff s\right)\cdot f_0.\]

\item For $k=(k_1,\dots,k_m)\in C([t_0,\infty),\cS(\rset^n,\rset^m))$ we define
\[E_4(t,t_0)f_0 := f_0 + \int_{t_0}^t k(\,\cdot\,,s)~\diff s.\]
\end{enumerate}
\end{dfn}

We see that the $E_i$'s are (approximate) time evolutions $\partial_t f = A_i f$ with respect to the operators $A_1 = \nu\cdot\Delta$, $A_2 = g\cdot\nabla$, the multiplication operator $A_3 = h\cdot$, and the addition operator $A_4 = + k$. By the Trotter approach \cite{trotter59} as an approximate solution of $\partial_t f = (A_1 + \dots + A_4)f$ we can therefore take
\[E_4(t_N,t_{N-1}) E_3(t_N,t_{N-1}) E_2(t_N,t_{N-1}) E_1(t_N,t_{N-1}) E_4(t_{N-1},t_{N-2})\cdots E_1(t_1,t_0)f_0.\]

\begin{dfn}\label{dfn:fN}
Let $n,m\in\nset$, $T>0$, $f_0\in\cS(\rset^n,\rset^m)$, and let $\nu$, $g$, $h$, and $k$ be as in \Cref{dfn:E1toE4}. For each $N\in\nset$ let $\cZ_N$ be a decomposition of $[0,T]$. We define the functions $f_N:\rset^n\times [0,T]\to\rset^m$ piece-wise on each interval $[t_i,t_{i+1}]$ by the following:
\begin{enumerate}[\bfseries (i)]
\item For $t\in [t_0,t_1]$ we set
\[f_N(\,\cdot\,,t) := E_4(t,t_0) E_3(t,t_0) E_2(t,t_0) E_1(t,t_0) f_0.\]

\item For $t\in [t_i,t_{i+1}]$ with $i=1,\dots,N-1$ we set
\[f_N(\,\cdot\,,t) := E_4(t,t_i) E_3(t,t_i) E_2(t,t_i) E_1(t,t_i) f_N(\,\cdot\,,t_i).\]
\end{enumerate}
\end{dfn}

The following ensures that we can apply the Arzel\`a--Ascoli Theorem to the family $\{f_N\}_{N\in\nset}$.

\begin{lem}\label{lem:fNschwartz}
$f_N\in C([0,T],\cS(\rset^n,\rset^m))$ and $C^1$ in $t\in (t_i,t_{i+1})$. 
\end{lem}
\begin{proof}
For all $f\in\cS(\rset^n,\rset^m)$ and $t \geq t'\geq 0$ we have $E_j(t,t')f\in\cS(\rset^n,\rset^m)$, $j=1,\dots,4$, and from \Cref{dfn:E1toE4} we also have $C^1$ in $t\in (t_i,t_{i+1})$.
\end{proof}

\section{The Cover of a Function}
\label{sec:cover}

For the approximate solutions $f_N$'s we need to bound all semi-norms
\[\|x^\alpha\cdot \partial^\beta f_N(x,t)\|_\infty\]
to apply the Arzel\`a--Ascoli Theorem. To handle these extensive calculations, we introduce the cover $\cov_R f$ of a function $f$, i.e., $|f_N(\,\cdot\,,t)|\leq \cov_R f_N(\,\cdot\,,t)$.

\begin{dfn}\label{dfn:cov}
Let $n,m\in\nset$ and $R\geq 0$. We introduce the \emph{cover $\cov_R$ of a Schwartz function} $f\in\cS(\rset^n,\rset^m)$ as the map
\[\cov_R:\cS(\rset^n,\rset^m)\to C(\rset^n,\rset)\]
defined by
\[(\cov_R f)(x):= \max_{y\in\rset^n:\|y\|_2\geq \|x\|_2 - R} \|f(y)\|_2.\]
\end{dfn}

The cover has the following general properties.

\begin{lem}\label{lem:coverGeneral}
Let $n,m\in\nset$, $R,R'\geq 0$, $a,b\in\rset$, and $f,f'\in\cS(\rset^n,\rset^m)$. The cover $\cov$ has the following general properties:
\begin{enumerate}[\bfseries (i)]
\item $\cov_R f$ is non-negative, radial symmetric, decreases with increasing $\|x\|_2$, and fulfills $|f| \leq \cov_0 f$.

\item If $|f|\leq |f'|$, then $\cov_R f \leq \cov_R f'$.

\item $\cov_R f \leq \cov_{R+R'} f$.

\item $\cov_R (af+bf') \leq |a|\cdot\cov_R f + |b|\cdot\cov_R f'$.

\item $\cov_R (\cov_{R'} f) = \cov_{R+R'} f$.

\item For $d\in\nset$ let $C_d>0$ be such that $|f|(x)\leq \frac{C_d}{1+\|x\|_2^d}$ for all $x\in\rset^n$. Then
\[(\cov_0 f)(x) \leq \frac{C_d}{1+\|x\|_2^d}\]
for all $x\in\rset^n$.

\item For $d\in\nset$ let $C_d>0$ be such that $|f(x)|\leq \frac{C_d}{1+\|x\|_2^d}$ for all $x\in\rset^n$. Then
\[(\cov_R f)(x) \leq \frac{C_d}{1+r^d} \qquad\text{with}\qquad r := \max\{0,\|x\|_2-R\}\]
for all $x\in\rset^n$.
\end{enumerate}
\end{lem}
\begin{proof}
(i)-(iv): Follows immediately from the \Cref{dfn:cov}.

(v): By (i) we have that $\cov_{R'} f$ is radial symmetric and decreases with increasing $\|x\|_2$. Hence, for fixed $x\in\rset^n$ there is a $y\in\rset^n$ with $\|y'\|_2\geq \|x\|_2-R'$ such that $|f(y')| = (\cov_{R'} f)(x)$. But in $\cov_{R+R'} f$ we have the restriction $\|y\|_2\geq \|x\|_2 - R - R'$, i.e., a larger range for $y$ and hence the inequality holds.

(vi): Since $a_d(x) := \frac{C_d}{1+\|x\|_2^d}$ is non-negative, radial symmetric, and decreases with increasing $\|x\|_2$ we have $\cov_0 a_d = a_d$ and since $|f|\leq a_d$ we have that (ii) implies $\cov_0 f \leq \cov_0 a_d = a_d$.

(vii): Set $a_d(x) := \frac{C_d}{1+\|x\|_2^d}$. Then
\[(\cov_R f)(x) \overset{\text{(v)}}{=} \cov_R (\cov_0 f) \overset{\text{(vi)}}{\leq} \cov_R a_d\]
and since $a_d$ is non-negative, radial symmetric, decreases with increasing $\|x\|_2$ we have $\cov_R a_d = \frac{C_d}{1+r^d}$ with $r := \max\{0,\|x\|_2-R\}$.
\end{proof}

We use the cover $\cov$ to bound the Schwartz semi-norms for the approximate time evolution from $E_1,\dots,E_4$. Hence, we collect in the following the special properties connected to the $E_i$'s.

\begin{lem}\label{lem:coverSpecialE}
Let $n,m\in\nset$, $R,R'\geq 0$, $t_0,t\in\rset$ with $t\geq t_0$, and $f\in\cS(\rset^n,\rset^m)$. Let $\nu$, $g$, $h$, and $k$ be as in \Cref{dfn:E1toE4}. The cover $\cov$ has the following special properties:
\begin{enumerate}[\bfseries (i)]
\item $\cov_R (E_1(t,t_0)f) \leq E_1(t,t_0) \cov_R f$.

\item With $G(t,t_0) := \int_{t_0}^t \|g(\,\cdot\,,s)\|_\infty~\diff s$ we get
\[|E_2(t,t_0)f| \leq \cov_{G(t,t_0)} f.\]

\item With $H(t,t_0) := \int_{t_0}^t \|h(\,\cdot\,,s)\|_\infty~\diff s$ we have
\[ \cov_R (E_3(t,t_0) f) \leq (1+H(t,t_0))\cdot\cov_R f.\]

\item $\cov_R (E_4(t,t_0)f) \leq \cov_R f + \int_{t_0}^t \cov_R k(\,\cdot\,,s)~\diff s$.
\end{enumerate}
\end{lem}
\begin{proof}
(i): Follow immediately from the fact that $E_1(t,t_0)$ is the convolution with the non-negative heat kernel $\Theta_{\nu,t-t_0}$.

(ii): Since $E_2(t,t_0)$ is a translation, each point $x\in\rset^n$ with $\|x\|_2=r$ is moved to some $x'\in\rset^n$ with $\|x'\|_2 \leq r + G(t,t_0)$ which proves the inequality.

(iii): Follows immediately from taking the supremum of $1 + \int_{t_0}^t\! h(\,\cdot\,,s)\,\diff s$.

(iv): We have
\begin{align*}
\cov_R (E_4(t,t_0)f)
&\overset{\phantom{\text{Lem.\ \ref{lem:coverGeneral}(iv)}}}{=} \cov_R\left( f + \int_{t_0}^t k(\,\cdot\,,s)~\diff s\right)\\
&\overset{\text{Lem.\ \ref{lem:coverGeneral}(iv)}}{\leq} \cov_R f + \cov_R\left( \int_{t_0}^t k(\,\cdot\,,s)~\diff s\right)\\
&\overset{\text{Lem.\ \ref{lem:coverGeneral}(iv)}}{\leq} \cov_R f + \int_{t_0}^t \cov_R k(\,\cdot\,,s)~\diff s.\tag*{\qedhere}
\end{align*}
\end{proof}

We want to bound all semi-norms in the Schwartz space. Hence, we also have to look at derivatives of the approximate solutions $f_N$.

\begin{lem}\label{lem:fxtDerivatives}
Let $n,m\in\nset$, $d\in\nset_0$, and $f_0\in\cS(\rset^n,\rset^m)$. Let $\nu$, $g$, $h$, and $k$ be as in \Cref{dfn:E1toE4}. Set
\begin{equation}\label{eq:fxt}\begin{split}
f(x,t) &:= E_4(t,0) E_3(t,0) E_2(t,0) E_1(t,0) f_0\\
&\phantom{:}= \left(\!1 +\! \int_0^t\!\! h(x,s)\,\diff s\right)(\Theta_{\nu,t} * f_0)\left(\!x +\! \int_0^t\!\! g(x,s)\,\diff s\right) + \int_0^t\!\! k(x,s)\,\diff s\\
&\phantom{:}= (1+H)\cdot F(x+G) + K
\end{split}
\end{equation}
with $F:=\Theta_{\nu,t}*f_0$, $H := \int_0^t h(\,\cdot\,,s)~\diff s$, $G := \int_0^t g(\,\cdot\,,s)~\diff s$, $G_j := \int_0^t g_j(\,\cdot\,,s)~\diff s$, and $K :=\int_0^t k(\,\cdot\,,s)~\diff s$. Then for $i_1,\dots,i_d\in\{1,\dots,n\}$ we have
\begin{align}\label{eq:fxtDerivatives}
\partial_{i_d}\dots\partial_{i_1} f(x,t) &= \bigg(1+H+ \sum_{r=1}^{d} \partial_{i_r}G_{i_r}\bigg)\cdot (\partial_{i_{d}}\dots \partial_{i_1} F)(x+G) + \partial_{i_{d}}\dots\partial_{i_1} K\notag\\
&\quad + \sum_{r=1}^{d} \sum_{j_r\neq i_r} \partial_{i_r}G_{j_r} \cdot (\partial_{i_{d}}\dots\partial_{i_{r+1}}\partial_{j_r}\partial_{i_{r-1}}\dots\partial_{i_1} F)(x+G)\\
&\quad + \cO(t^2) + o(d-1,t)\notag
\end{align}
where $o(d-1,t)$ is the set of functions $\int_0^t \{\text{derivatives of}\ h\ \text{or}\ g\}\cdot \Theta_{\nu,t}*(\partial^\gamma f_0)$ with $\gamma\in\nset_0^n$ and $|\gamma|\leq d-1$, i.e., the growth in $t$ is at most linear.
\end{lem}
\begin{proof}
We prove (\ref{eq:fxtDerivatives}) by induction over $d\in\nset_0$.

\underline{$d=0$:} Clear, since (\ref{eq:fxt}) = (\ref{eq:fxtDerivatives}).

\underline{$d\to d+1$:} Assume (\ref{eq:fxtDerivatives}) holds for some $d\in\nset_0$. Let $i_1,\dots,i_{d+1}\in\{1,\dots,n\}$. Then we have
\begin{align*}
&\partial_{i_{d+1}}\dots\partial_{i_1} F\\
&= \partial_{i_{d+1}} [\partial_{i_d}\dots\partial_{i_1} F]\\
&= \partial_{i_{d+1}} \bigg[ \bigg(1+H+ \sum_{r=1}^d\partial_{i_r}G_{i_r}\bigg)\cdot (\partial_{i_d}\dots \partial_{i_1} F)(x+G)\\
&\qquad\qquad + \sum_{r=1}^d\sum_{j_r\neq i_r} \partial_{i_r}G_{j_r}\cdot (\partial_{i_d}\dots\partial_{i_{r+1}}\partial_{j_r}\partial_{i_{r-1}}\dots\partial_{i_1} F)(x+G)\\
&\qquad\qquad + \partial_{i_d}\dots\partial_{i_1} K + \cO(t^2) + o(d-1,t) \bigg]\\
&= \underbrace{\bigg(\partial_{i_{d+1}} H+ \sum_{r=1}^d \partial_{i_{d+1}}\partial_{i_r}G_{i_r}\bigg)\cdot (\partial_{i_d}\dots \partial_{i_1} F)(x+G)}_{\in o(d,t)}\\
&\quad + \bigg(1+H+ \sum_{r=1}^d\partial_{i_r}G_{i_r}\bigg)\cdot \sum_{j_{d+1}=1}^n (\delta_{j_{d+1},i_{d+1}} + \partial_{i_{d+1}}G_{j_{d+1}})\\
&\qquad\qquad\qquad\qquad\qquad\qquad\times (\partial_{j_{d+1}}\partial_{i_d}\dots \partial_{i_1} F)(x+G)\\
&\quad + \sum_{r=1}^d\sum_{j_r\neq i_r} \underbrace{\partial_{i_{d+1}}\partial_{i_r}G_{j_r}\cdot (\partial_{i_d}\dots\partial_{i_{r+1}}\partial_{j_r}\partial_{i_{r-1}}\dots\partial_{i_1} F)(x+G)}_{\in o(d,t)}\\
&\quad + \sum_{r=1}^d \sum_{j_r\neq i_r} \underbrace{\partial_{i_r}G_{j_r}\cdot \sum_{j_{d+1}=1}^n (\delta_{j_{d+1},i_{d+1}} + \partial_{i_{d+1}} G_{j_{d+1}})}_{\substack{= \partial_{i_r} G_{j_r}\cdot (1+\partial_{i_{d+1}} G_{i_{d+1}})= \partial_{i_r} G_{j_r} + \cO(t^2)\ \text{for}\ j_{d+1}=i_{d+1};\\ = \partial_{i_r} G_{j_r}\cdot \partial_{i_{d+1}} G_{j_{d+1}}\in\cO(t^2)\qquad\qquad\quad \text{for}\ j_{d+1}\neq i_{d+1}}}\\
&\qquad\qquad\qquad\qquad\qquad\qquad \times (\partial_{j_{d+1}}\dots\partial_{i_{r+1}}\partial_{j_r}\partial_{i_{r-1}}\dots\partial_{i_1} F)(x+G)\\
&\quad + \partial_{i_{d+1}}\dots\partial_{i_1} K + \cO(t^2) + o(d-1,t)\\
&= \underbrace{\bigg(1+H+ \sum_{r=1}^d\partial_{i_r}G_{i_r}\bigg)\cdot (1 + \partial_{i_{d+1}}G_{i_{d+1}})}_{= 1+H+ \sum_{r=1}^{d+1} \partial_{i_r}G_{i_r} + \cO(t^2)}\cdot (\partial_{i_{d+1}}\dots \partial_{i_1} F)(x+G)\\
&\quad + \sum_{j_{d+1}\neq i_{d+1}}^n \underbrace{\bigg(1+H+ \sum_{r=1}^d\partial_{i_r}G_{i_r}\bigg)\cdot\partial_{i_{d+1}}G_{j_{d+1}}}_{=\partial_{i_{d+1}}G_{j_{d+1}}+\cO(t^2)}\cdot (\partial_{j_{d+1}}\partial_{i_d}\dots \partial_{i_1} F)(x+G)\\
&\quad + \sum_{r=1}^d \sum_{j_r\neq i_r} \partial_{i_r}G_{j_r} \cdot (\partial_{i_{d+1}}\dots\partial_{i_{r+1}}\partial_{j_r}\partial_{i_{r-1}}\dots\partial_{i_1} F)(x+G)\\
&\quad + \partial_{i_{d+1}}\dots\partial_{i_1} K + \cO(t^2) + o(d,t)\\
&= \bigg(1+H+ \sum_{r=1}^{d+1} \partial_{i_r}G_{i_r}\bigg)\cdot (\partial_{i_{d+1}}\dots \partial_{i_1} F)(x+G)\\
&\quad + \sum_{r=1}^{d+1} \sum_{j_r\neq i_r} \partial_{i_r}G_{j_r} \cdot (\partial_{i_{d+1}}\dots\partial_{i_{r+1}}\partial_{j_r}\partial_{i_{r-1}}\dots\partial_{i_1} F)(x+G)\\
&\quad + \partial_{i_{d+1}}\dots\partial_{i_1} K + \cO(t^2) + o(d,t)
\end{align*}
which proves (\ref{eq:fxtDerivatives}) for $d+1$ and hence by induction (\ref{eq:fxtDerivatives}) for all $d\in\nset_0$.
\end{proof}

\section{The Existence of a Schwartz Function Valued Solution}
\label{sec:main}

The next result is the main theorem of this article. It completely solves the Schwartz function regularity problem of (\ref{eq:generalPDE}) with explicit bounds for the Schwartz function semi-norms $\|x^\alpha\cdot \partial^\beta f(x,t)\|_\infty$

\begin{thm}\label{thm:main}
Let $n,m\in\nset$, $d\in\nset_0$, and $\nu=(\nu_1,\dots,\nu_n)\in [0,\infty)^n$. Furthermore, let $g\in C^d([0,\infty),C_b^\infty(\rset^n,\rset^n))$, $h\in C^d([0,\infty),C_b^\infty(\rset^n,\rset^{m\times m}))$, and $k\in C^d([0,\infty),\cS(\rset^n,\rset^m))$. Set $H(t_1,t_0) := \int_{t_0}^{t_1} \|h(\,\cdot\,,s)\|_\infty\,\diff s$, $G(t_1,t_0) := \int_{t_0}^{t_1} \|g(\,\cdot\,,s)\|_\infty\,\diff s$, and $G'(t_1,t_0) := \int_{t_0}^{t_1} \|\nabla g(\,\cdot\,,s)\|_\infty\,\diff s$ for all $t_1 \geq t_0 \geq 0$.
For $f_0\in\cS(\rset^n,\rset^m)$ the initial value problem
\begin{align*}
\partial_t f &= \nu\cdot\Delta f + g\cdot\nabla f + h\cdot f + k\\
f(\,\cdot\,,0) &= f_0
\end{align*}
has a solution $f\in C^{d+1}([0,\infty),\cS(\rset^n,\rset^m))$ with the covers
\begin{equation}\label{eq:mainBound}\begin{split}
|\partial^\beta f(\,\cdot\,,t)| \;\leq\;  B_{|\beta|}(\,\cdot\,,t) := &\ \exp\left(H(t,0) + b^2\cdot G'(t,0) \right)\\
&\qquad\quad\times E_1(t,0)\cov_{G(t,0)}(\max_\beta\partial^\beta f_0)\\
& + \int_0^t \exp\left(H(t,s) + b^2\cdot G'(t,s) \right)\\
&\qquad\quad\times E_1(t,s)\cov_{G(t,s)}\left(\max_\beta \partial^\beta k(\,\cdot\,,s)\right)\diff s\\
& + \overline{B}_{d-1}(\,\cdot\,,t)
%
%
\end{split}\end{equation}
for all $\beta\in\nset_0$ such that
\[\|x^\alpha\cdot \partial^\beta f(x,t)\|_\infty \quad\leq\quad  \sup_{x\in\rset^n} \|x\|_2^{|\alpha|}\cdot B_{|\beta|}(x,t) \quad<\quad\infty\]
for all $\alpha,\beta\in\nset_0^n$ and $t\geq 0$. Therein, $\overline{B}_{d}(\,\cdot\,,t)$ is a linear combination of bounds $B_j$ for $j \leq d-1$ with coefficients as integrals over $\partial^\gamma g$ and $\partial^\gamma h$ with $|\gamma|\leq d$.
\end{thm}
\begin{proof}
Let $N\in\nset$, $T>0$, and $\Delta\cZ_N$ be a decomposition of $[0,T]$. Take the $f_N$'s from \Cref{dfn:fN}, i.e., $f_N\in C([0,T],\cS(\rset^n,\rset^m)$ by \Cref{lem:fNschwartz}.

We look at the family $\{f_N\}_{N\in\nset}$ and want to use \Cref{lem:arzelaascoli} to find an accumulation point $f$ for $N\to\infty$ with $\Delta\cZ_N\to 0$. Since $T>0$ is arbitrary, it is sufficient that we bound the semi-norms at $t=T$. Additionally, since $|\partial^\beta f_N|(\,\cdot\,,T) \leq \cov_0 (\partial^\beta f_N)(\,\cdot\,,T)$ we have
\[\|x^\alpha\cdot \partial^\beta f_N(x,T)\|_\infty \leq \|x^\alpha\cdot \cov_0(\partial^\beta f_N)(x,T)\|_\infty.\]
We will proceed via induction over $b = |\beta|\in\nset_0$.

\underline{$b=0$:} We have
\begin{align}
&|f_N(\,\cdot\,,t_N)|\notag\\
&\otex{Lem.\ \ref{lem:coverGeneral}(i)}{\leq} \cov_0 f_N(\,\cdot\,,t_N)\notag\\
&\otex{Dfn.\ \ref{dfn:fN}}{=} \cov_0 E_4(t_N,t_{N-1}) \dots E_1(t_N,t_{N-1}) f_N(\,\cdot\,,t_{N-1})\notag\\
&\otex{Lem.\ \ref{lem:coverSpecialE}(iv)}{\leq} \cov_0 E_3(t_N,t_{N-1}) \dots E_1(t_N,t_{N-1}) f_N(\,\cdot\,,t_{N-1})\notag\\
&\qquad\qquad\quad\; + \int_{t_{N-1}}^{t_N} \cov_0 k(\,\cdot\,,s)~\diff s\notag\\
&\otex{Lem.\ \ref{lem:coverSpecialE}(iii)}{\leq} \exp(H(t_N,t_{N-1}))\cdot \cov_0 E_2(t_N,t_{N-1}) E_1(t_N,t_{N-1}) f_N(\,\cdot\,,t_{N-1})\notag\\
&\qquad\qquad\quad\; + \int_{t_{N-1}}^{t_N} \cov_0 k(\,\cdot\,,s)~\diff s\notag\\
&\otex{Lem.\ \ref{lem:coverSpecialE}(ii)}{\leq} \exp(H(t_N,t_{N-1}))\cdot \cov_{G(t_N,t_{N-1})} E_1(t_N,t_{N-1}) f_N(\,\cdot\,,t_{N-1})\notag\\
&\qquad\qquad\quad\; + \int_{t_{N-1}}^{t_N} \cov_0 k(\,\cdot\,,s)~\diff s\notag\\
&\otex{Lem.\ \ref{lem:coverSpecialE}(i)}{\leq} \exp(H(t_N,t_{N-1}))\cdot E_1(t_N,t_{N-1})\cov_{G(t_N,t_{N-1})} f_N(\,\cdot\,,t_{N-1})\notag\\
&\qquad\qquad\quad\; + \int_{t_{N-1}}^{t_N} \cov_0 k(\,\cdot\,,s)~\diff s\notag\\
\intertext{and applying \Cref{dfn:fN} and \Cref{lem:coverSpecialE} (i-iv) on the time interval $[t_{N-1},t_{N-2}]$ in the same way gives}
&\otex{}{\leq} \exp(H(t_N,t_{N-2}))\cdot E_1(t_N,t_{N-2})\cov_{G(t_N,t_{N-2})} f_N(\,\cdot\,,t_{N-2})\notag\\
&\qquad\qquad\quad\; + \exp(H(t_N,t_{N-1}))\cdot\int_{t_{N-2}}^{t_{N-1}} E_1(t_N,t_{N-1})\cov_0 k(\,\cdot\,,s)~\diff s\notag\\
&\qquad\qquad\quad\; + \int_{t_{N-1}}^{t_N} \cov_0 k(\,\cdot\,,s)~\diff s\notag
\intertext{and proceeding gives finally}
&\otex{}{\leq} \exp(H(t_N,t_0))\cdot E_1(t_N,t_0)\cov_{G(t_N,t_0)} f_0\notag\\
&\qquad\qquad\quad\; + \sum_{i=0}^N \exp(H(t_N,t_{N-i}))\cdot\int_{t_{N-1-i}}^{t_{N-i}} E_1(t_N,t_{N-i})\cov_{G(t_N,t_{N-i})} k(\,\cdot\,,s)~\diff s\notag
\intertext{which converges by Riemann integration for $N\to\infty$ with $\Delta\cZ_N\to 0$ to}
&\otex{}{\to} B_0(x,T) = \exp(H(T,0))\cdot E_1(T,0)\cov_{G(T,0)} f_0 \label{eq:proof}\\
&\qquad\qquad\qquad\qquad\qquad\; + \int_0^T \exp(H(T,s))\cdot E(T,s) \cov_{G(T,s)} k(\,\cdot\,,s)~\diff s.\notag
\end{align}
By \Cref{lem:coverGeneral}(vi) and (vii) we have that
\[\|x^\alpha\cdot f_N(x,T)\|_\infty \leq \sup_{x\in\rset^n} \|x\|_2^{|\alpha|}\cdot B_0(x,T) < \infty\]
for all $\alpha\in\nset_0^n$.

\underline{$d\to d+1$:} Assume for all $i=0,\dots,d$ we have bounds $B_i$ with
\[|\partial^\beta f_N(x,T)| \leq B_{|\beta|}(x,T)\]
such that
\[\|x^\alpha\cdot \partial^\beta f_N(x,T)\|_\infty \leq \sup_{x\in\rset^n} \|x\|_2^{|\alpha|}\cdot B_{|\beta|}(x,T) < \infty\]
for all $N\in\nset$ and all $\alpha,\beta\in\nset_0^n$ with $|\beta|\leq d$. We show that such a bound $B_{d+1}$ also exists.

From \Cref{lem:fxtDerivatives} we have that $\partial^\beta f(\,\cdot\,,t_N)$ gives an induction from $t_{i}$ to $t_{i-1}$ where the derivative $\partial^\beta$ applies to $f_N$ again, gives contributions of order $\Delta\cZ_N^2$, contributions from lower derivatives linear in $\Delta\cZ_N$. In the limit $\Delta\cZ_N\to 0$ by Riemann integration (\ref{eq:proof}) the $\Delta\cZ_N^2$ contributions vanish and the $o(d,\Delta\cZ_N)$ contributions become a sum over the covers $B_0,\dots, B_d$. Hence, to shorten the calculations, we only drag $\cO(\Delta\cZ_N^2)$ and $o(d,\Delta\cZ_N)$ through the calculations:
\begin{align*}
&\max_{\beta\in\nset_0^n:|\beta|=b} |\partial^\beta f_N(\,\cdot\,,t_N)|\\
&\otex{Lem.\ \ref{lem:coverGeneral}(i)}{\leq} \cov_0 (\max_\beta \partial^\beta f_N(\,\cdot\,,t_N))\\
&\otex{Def.\ \ref{dfn:fN}}{=} \cov_0 (\max_\beta \partial^\beta E_4(t_N,t_{N-1})\dots E_1(t_N,t_{N-1}) f_N(\,\cdot\,,t_{N-1}))\\
&\otex{Lem.\ \ref{lem:coverSpecialE}(iv)}{\leq} \cov_0 (\max_\beta\partial^\beta E_3(t_N,t_{N-1})\dots E_1(t_N,t_{N-1}) f_N(\,\cdot\,,t_{N-1}))\\
&\qquad\qquad\quad\; + \int_{t_{N-1}}^{t_N} \cov_0(\max_\beta\partial^\beta k(\,\cdot\,,s))~\diff s \\
&\otex{Lem.\ \ref{lem:fxtDerivatives}}{\leq} (1+H(t_N,t_{N-1}) + b\cdot G'(t_N,t_{N-1}))\\
&\qquad\qquad\qquad\qquad\times \cov_0 E_2(t_N,t_{N-1}) E_1(t_N,t_{N-1}) (\max_\beta \partial^\beta f_N(\,\cdot\,,t_{N-1}))\\
&\qquad\qquad\quad\; + b(b-1)\cdot G'(t_N,t_{N-1})\\
&\qquad\qquad\qquad\qquad\times \cov_0 E_2(t_N,t_{N-1}) E_1(t_N,t_{N-1})(\max_\beta \partial^\beta f_N(\,\cdot\,,t_{N-1}))\\
&\qquad\qquad\quad\; + \int_{t_{N-1}}^{t_N} \cov_0(\max_\beta\partial^\beta k(\,\cdot\,,s))~\diff s + \cO(\Delta\cZ_N^2) + o(d,\Delta\cZ_N)\\
&\otex{$b\!+\!b(b\!-\!1)\!=\!b^2$}{=} (1+H(t_N,t_{N-1}) + b^2\cdot G'(t_N,t_{N-1}))\\
&\qquad\qquad\qquad\qquad\times \cov_0 E_2(t_N,t_{N-1}) E_1(t_N,t_{N-1}) (\max_\beta \partial^\beta f_N(\,\cdot\,,t_{N-1}))\\
&\qquad\qquad\quad\; + \int_{t_{N-1}}^{t_N} \cov_0(\max_\beta\partial^\beta k(\,\cdot\,,s))~\diff s + \cO(\Delta\cZ_N^2) + o(d,\Delta\cZ_N)\\
&\otex{Lem.\ \ref{lem:coverSpecialE}}{\leq} (1+ H(t_N,t_{N-1}) + b^2\cdot G'(t_N,t_{N-1}))\\
&\qquad\qquad\qquad\qquad\times E_1(t_N,t_{N-1})\cov_{G(t_N,t_{N-1})}(\max_\beta \partial^\beta f_N(\,\cdot\,,t_{N-1}))\\
&\qquad\qquad\quad\; + \int_{t_{N-1}}^{t_N} \cov_0(\max_\beta\partial^\beta k(\,\cdot\,,s))~\diff s + \cO(\Delta\cZ_N^2) + o(d,\Delta\cZ_N)\\
&\otex{$1+y\leq e^y$}{\leq} \exp(H(t_N,t_{N-1}) + b^2\cdot G'(t_N,t_{N-1}))\\
&\qquad\qquad\qquad\qquad\times E_1(t_N,t_{N-1})\cov_{G(t_N,t_{N-1})}(\max_\beta \partial^\beta f_N(\,\cdot\,,t_{N-1}))\\
&\qquad\qquad\quad\; + \int_{t_{N-1}}^{t_N} \cov_0(\max_\beta\partial^\beta k(\,\cdot\,,s))~\diff s + \cO(\Delta\cZ_N^2) + o(d,\Delta\cZ_N)
\intertext{and proceeding with this on each interval $[t_i,t_{i-1}]$ we finally get}
&\otex{}{\leq}  \exp\left(H(t_N,t_0) + b^2\cdot G'(t_N,t_0) \right)\\
&\qquad\qquad\qquad\qquad\times E_1(t_N,t_0)\cov_{G(t_N,t_0)}(\max_\beta\partial^\beta f_N(\,\cdot\,,t_0))\\
&\qquad\qquad\quad\; +\sum_{i=1}^N \exp\left(H(t_N,t_{N+1-i}) + b^2\cdot G'(t_N,t_{N+1-i}) \right)\\
&\qquad\qquad\qquad\qquad\times \int_{t_{N-i}}^{t_{N+1-i}} E_1(t_N,t_{N+1-i})\cov_{G(t_N,t_{N+1-i})}(\max_\beta \partial^\beta k(\,\cdot\,,s))\diff s\\
&\qquad\qquad\quad\; + \cO(\Delta\cZ_N^2) + o(d,\Delta\cZ_N)
\intertext{which converges for $N\to\infty$ with $\Delta\cZ_N\to 0$ by Riemann integration to}
&\otex{}{\to} \exp\left(H(T,0) + b^2\cdot G'(T,0) \right)\cdot E_1(T,0)\cov_{G(T,0)}(\max_\beta\partial^\beta f_0)\\
&\qquad\qquad\quad\; + \int_0^T \exp\left(H(T,s) + b^2\cdot G'(T,s) \right)\\
&\qquad\qquad\qquad\qquad\times E_1(T,s)\cov_{G(T,s)}(\max_\beta \partial^\beta k(\,\cdot\,,s))\diff s\\
&\qquad\qquad\quad\; + \overline{B}_d(\,\cdot\,,T)
\end{align*}
which is the bound $B_{d+1}$ with
\[\|x^\alpha\cdot \partial^\beta f_N(\,\cdot\,,T)\|_\infty \leq \|x^\alpha\cdot B_{d+1}(x)\|_\infty < \infty\]
for all $\alpha,\beta\in\nset_0^n$ with $|\beta|=d+1$. In summary, we have shown that for the family $\{f_N\}_{N\in\nset}$ on $[0,T]$ \Cref{lem:arzelaascoli}(i) is fulfilled.

It remains to show that condition (ii) of \Cref{lem:arzelaascoli} is fulfilled. Since all $f_N$ are piece-wise differentiable, it is sufficient to show that $\partial_t f_N$ is bounded. But this follows immediately from the bounds $B_0,B_1,\dots$ and hence there exists a constant $L>0$ such that
\[\sup_{t\in [0,T]} \|\partial_t f_N(\,\cdot\,,t)\|_\infty \leq L < \infty\]
for all $N\in\nset$, i.e., $f_N$ are all Lipschitz in $t\in [0,T]$ with a Lipschitz constant independent on $N$, $x$, and $t$. Condition (ii) in \Cref{lem:arzelaascoli} is then fulfilled since Lipschitz continuity implies equi-continuity.

Since conditions (i) and (ii) of \Cref{lem:arzelaascoli} are fulfilled $\{f_N\}_{N\in\nset}$ is relatively compact. Hence, there exists a subsequence $(N_i)_{i\in\nset}\subseteq\nset$ such that $f_{N_i}$ converges on $\rset^n\times [0,T]$ to a function $f\in C([0,T],\cS(\rset^n,\rset^m))$, i.e.,
\begin{equation}\label{eq:convergenz}
\sup_{t\in [0,T]} \big\|x^\alpha\cdot\partial^\beta f_{N_i}(x,t)-x^\alpha\cdot\partial^\beta f(x,t)\big\|_\infty\ \xrightarrow{i\to\infty}\ 0
\end{equation}
for all $\alpha,\beta\in\nset_0^n$.

We now show that the accumulation point $f$ solves (\ref{eq:generalPDE}) and is in $C^{d+1}$ in $t$, i.e., $f\in C^{d+1}([0,T],\cS(\rset^n,\rset^m))$. By \Cref{dfn:fN} of the $f_N$ we have that each $f_N$ is piece-wise differentiable in $t$ and taking the derivative $\partial_t f_N$ we find
\begin{multline}\label{eq:proof2}
\bigg\| x^\alpha\cdot \partial_t \partial^\beta f_{N_i}(x,t) - x^\alpha\cdot \partial^\beta \Big[\nu\Delta f(x,t) + (g(x,t)\cdot\nabla) f(x,t)\\
+ h(x,t)\cdot f(x,t) + k(x,t)\Big]\bigg\|_\infty\ \xrightarrow{i\to\infty}\ 0
\end{multline}
for all $\alpha,\beta\in\nset_0^n$ uniformly in $t\in [0,T]$. Hence, with $\alpha=\beta=0$ we have that $f$ solves (\ref{eq:generalPDE}) and for every fixed $x\in\rset^n$ the function $G(t) = f(x,t)$ is continuous in $t$. Now let $x\in\rset^n$ and $\alpha=\beta=0$, then (\ref{eq:convergenz}) implies
\begin{subequations}\label{eq:timeRegularity}
\begin{align}
& f(x,t)\notag\\ &= \lim_{i\to\infty} f_{N_i}(x,t)
\intertext{since the $f_{N_i}$ are piece-wise differentiable $\partial_t f_{N_i}(x,t)$ is Riemann integrable in $t$}
&= \lim_{i\to\infty} \int_0^t \partial_t f_{N_i}(x,s)~\diff s
\intertext{where (\ref{eq:convergenz}) implies we can interchange integration and the limit $i\to\infty$}
&= \int_0^t \lim_{i\to\infty} \partial_t f_{N_i}(x,s)~\diff s
\intertext{and then (\ref{eq:proof2}) implies}
&=
\int_0^t \underbrace{\nu\Delta f(x,s) + (g(x,s)\cdot\nabla) f(x,s)+ h(x,s)\cdot f(x,s) + k(x,s)}_{\text{integrand (I)}}~\diff s
\end{align}\end{subequations}
for all $t\in [0,T]$. Since $g$, $h$, and $k$ are $C^d$ in $t$ and $f$ is $C^0$, we have that the integrand (I) is $C^0$ in $t$. Hence, $f$ is an integral over a $C^0$ function and therefore $C^1$ in $t$. Continuing this argument we see that $f$ is $C^{d+1}$ in $t$ since the integrand (I) is at least $C^d$ in $t$.

Since $T>0$ was arbitrary, $\{f_N\}_{N\in\nset}$ has an accumulation point $f$ for all $T>0$ and all accumulation points fulfill (\ref{eq:generalPDE}) with $f$ is $C^{d+1}$ in $t$.
\end{proof}

\begin{rem}
In the standard approach via weak solutions and Sobolev theory one usually works with spaces $L^p([0,T],L^q(\rset^n))$, $1\leq p,q\leq \infty$, or more generally $L^p([0,T],X)$, $X$ a Banach space with norm $\|\cdot\|_X$. Then $L^p([0,T],X)$ is equipped with the $L^p$-norm $\|F\|_{L^p}$ of $F(t):= \|f(\,\cdot\,,t)\|_X$ and therefore it is also a Banach space. For regularity in $t$ one then has additional work to do.

But in our approach we do not work in a Banach space $X$ but in a Montel space, i.e., we do not have a single norm $\|\cdot\|_X$ but a family of semi-norms, here $\|x^\alpha\cdot \partial^\beta f(x,t)\|_\infty$. The convergence of our approximation is (\ref{eq:convergenz}), i.e., uniform on $\rset^n\times [0,T]$ for all derivatives. (\ref{eq:convergenz}) takes care of all spatial derivatives and therefore by (\ref{eq:timeRegularity}) also of the time derivatives. We demonstrated this explicitly in (\ref{eq:timeRegularity}) for clarity but this argument also follows from \cite[Thm.\ 7.17]{rudin76}.\exmsymbol
\end{rem}

%
%

%
%

\begin{rem}\label{rem:lowerDependencies}
From (\ref{eq:mainBound}) we see that $\|x^\alpha\cdot\partial^\beta f(\,\cdot\,,t)\|_\infty$ depend only on $\|x^\gamma\cdot\partial^\delta f_0\|_\infty$ for all $\gamma,\delta\in\nset_0^n$ with $|\gamma|\leq |\alpha|$ and $|\delta|\leq |\beta|$. Hence, weaker conditions on the initial value $f_0$ is possible since $\cS(\rset^n)$ is dense in any $W^{p,k}(\rset^n)$ with $k\in\nset_0$ and $p\in [1,\infty)$.\exmsymbol
\end{rem}

\section{Burgers' Equation}
\label{sec:burgers}

For Burgers' equation we have from \Cref{thm:main} the following.

\begin{thm}\label{thm:burger}
Let $u_0\in\cS(\rset,\rset)$. Then there exist maximal $T_1,T_2>0$ such that Burgers' equation
\begin{equation}\label{eq:burgers}\begin{split}
\partial_t u &= -u\cdot\partial_x u\\
u(\,\cdot\,,0) &= u_0
\end{split}\end{equation}
has a unique classical solution $u\in C^\infty((-T_1,T_2),\cS(\rset,\rset))$. $(-T_1,T_2)$ is the maximal interval such that $u\in C((-T_1,T_2),C_b^\infty(\rset,\rset))$.
\end{thm}
\begin{proof}
The $C_b^\infty$ solution of Burgers' equation is unique, i.e., there exists a maximal time interval $(-T_1,T_2)$ such that $u\in C^\infty((-T_1,T_2),C_b^\infty(\rset,\rset))$. Set $n=m=1$, $\nu=0$, $g=u$, $h=0$, and $k=0$ in (\ref{eq:generalPDE}). Then \Cref{thm:main} shows that there exists a $f\in C^\infty((-T_1,T_2),\cS(\rset,\rset))$ that solves (\ref{eq:generalPDE}). By uniqueness of $u$ from Burgers' equation we have $f=u$.
\end{proof}

Since for Burgers' equation we have $u(\,\cdot\,,t)\in\cS(\rset,\rset)$ we can in theory calculate all moments of $u$ for all times $t\in (-T_1,T_2)$. The simplicity of (\ref{eq:burgers}) allows us to calculate the time-dependent moments explicitly.

\begin{thm}\label{thm:burgerMoments}
Let $u_0\in\cS(\rset,\rset)$. Then for all $p\in\nset$ and $k\in\nset_0$ the time-dependent moments
$s_{k,p}(t) := \int_\rset x^k u(x,t)^p\,\diff x$
of the solution $u$ of Burgers' equation (\ref{eq:burgers}) are
\[s_{k,p}(t) = \sum_{i=0}^{k} \frac{s_{k-i,p+i}(0)}{i!}\cdot t^i\cdot \prod_{j=0}^{i-1} \frac{(p+j)\cdot (k-j)}{1+(p+j)^2} \quad\in\rset[t].\]
\end{thm}
\begin{proof}
We proceed via induction over $k\in\nset_0$.

\underline{$k=0$:} We have
\begin{align*}
\partial_t s_{0,p}(t) &= \partial_t \int_\rset u(x,t)^p~\diff x
= -p \int_\rset u(x,t)^p \cdot \partial_x u(x,t)~\diff x
\intertext{with partial integration since $u(\,\cdot\,,t)$ is a Schwartz function}
&= p \int_\rset \partial_x [u(x,t)^p] \cdot u(x,t)~\diff x
= p^2 \int_\rset u(x,t)^p\cdot \partial_x u(x,t)~\diff x\\
&= -p\cdot \partial_t s_{0,p}(t)
\end{align*}
which gives $\partial_t s_{0,p}(t) = 0$ and therefore $s_{0,p}(t) = s_{0,p}(0)$.

\underline{$k-1\to k$:} We have
\begin{align*}
\partial_t s_{k,p}(t) &= \partial_t \int_\rset x^k\cdot u(x,t)^p~\diff x\\
&= -p \int_\rset x^k\cdot u(x,t)^p\cdot \partial_x u(x,t)~\diff x\\
&= p \int_\rset \partial_x ( x^k\cdot u(x,t)^p)\cdot u(x,t)~\diff x\\
&= p\cdot k \int_\rset x^{k-1}\cdot u(x,t)^{p+1}~\diff x + p^2 \int_\rset x^k\cdot u(x,t)^p\cdot \partial_x u(x,t) ~\diff x\\
&= p\cdot k\cdot s_{k-1,p+1}(t) - p^2\cdot\partial_t s_{k,p}(t)\\
&= \frac{p\cdot k}{1+p^2}\cdot s_{k-1,p+1}(t)
\end{align*}
and solving this induction gives
\begin{align*}
s_{k,p}(t) &= s_{k,p}(0) + \frac{p\cdot k}{1+p^2} \int_0^t s_{k-1,p+1}(\tau_1)~\diff\tau_1\\
&= s_{k,p}(0) + \frac{p\cdot k}{1+p^2} \int_0^t \left[ s_{k-1,p+1}(0) + \frac{(p+1)(k-1)}{1+(p+1)^2} \int_0^{\tau_1} s_{k-2,p+2}(\tau_2)~\diff\tau_2\right]\diff\tau_1\\
&\ \;\vdots\\
&= \sum_{i=0}^{k} \frac{s_{k-i,p+i}(0)}{i!}\cdot t^i\cdot \prod_{j=0}^{i-1} \frac{(p+j)\cdot (k-j)}{1+(p+j)^2}
\end{align*}
which proves the statement.
\end{proof}

In Burgers' equation as a transport equation when $u_0\geq 0$ then the classical solution remains non-negative. But from the moments in \Cref{thm:burgerMoments} we observe the following.

\begin{exm}\label{exm:momFinBreakDown}
For $p=1$ we have the following three explicit time-dependent moments from \Cref{thm:burgerMoments}:
\begin{align*}
\int_\rset u(x,t)~\diff x =\ s_{0,1}(t)\ &= s_{0,1}(0),\\
\int_\rset x\cdot u(x,t)~\diff x =\ s_{1,1}(t)\ &= s_{1,1}(0) + s_{0,2}(0)\cdot t,\\
\int_\rset x^2\cdot u(x,t)~\diff x =\ s_{2,1}(t)\ &= s_{2,1}(0) + s_{1,2}(0)\cdot t + \frac{2 s_{0,3}(0)}{5}\cdot t^2.
\end{align*}
For the function
\[u_0(x) := \begin{cases} 1+x & \text{for}\ x\in [-1,0],\\ 1-x & \text{for}\ x\in [0,1],\\ 0 & \text{else}
\end{cases}\]
we have $s_{0,1}(0) = 1$, $s_{1,1}(0)=0$, $s_{0,2}(0) = \frac{2}{3}$, $s_{2,1}(0) = \frac{1}{6}$, $s_{1,2}(0) = 0$, $s_{0,3} = \frac{1}{2}$ and therefore
\begin{equation}\label{eq:negative}
\int_\rset (x-t)^2\cdot u(x,t)~\diff x = L_{s(t)}((x-t)^2) = \frac{1}{6} - \frac{2}{15}t^2 \quad\xrightarrow{t\to \pm\infty}\quad -\infty.
\end{equation}
Since $u_0\not\in\cS(\rset)$ using a mollifier we get $u_0^\varepsilon := S_\varepsilon * u_0 \in C_0^\infty(\rset)\subset \cS(\rset)$ for any $\varepsilon > 0$. We can chose by continuity of the $s_{p,k}(0)$ on $\varepsilon$ an $\varepsilon>0$ small such that the coefficient of $t^2$ in (\ref{eq:negative}) remains negative. Hence, non-negativity in the assumed classical solution is not preserved, i.e., we have a finite breakdown.\exmsymbol
\end{exm}

Let $k\in\nset$ and $k\geq 2$. For
\begin{equation}\label{eq:burgerK}
\partial_t u = u^k\cdot \partial_x u
\end{equation}
multiply (\ref{eq:burgerK}) with $k\cdot u^{k-1}$ to get $\partial_t (u^k) = u^k\cdot \partial_x (u^k)$. This is Burgers' equation with $v=u^k$. If $u_0\geq 0$ we can allow $k\in [1,\infty)$ in (\ref{eq:burgerK}).

\section{Euler and Navier--Stokes Equations}
\label{sec:ens}

By Beale--Kato--Majda \cite{beale84} the classical solutions of the Euler and the Navier--Stokes equations $u$ and $\omega$ exist as long as
$\|\omega(\,\cdot\,,t)\|_\infty <\infty$.
A finite breakdown in time can therefore be observed through a breakdown of $\|\omega\|_\infty$. For the Euler and the Navier--Stokes equations we have from \Cref{thm:main} the following.

\begin{thm}\label{thm:ensSchwartz}
Let $\nu\in [0,\infty)$, $u_0\in C_b^\infty(\rset^3,\rset^3)$ with $\divv u_0=0$ and $\omega_0:=\rot u_0\in\cS(\rset^3,\rset^3)$, and $T^*>0$ be maximal such that $u$ is the solution of the Euler ($\nu=0$) resp.\ Navier--Stokes ($\nu>0$) equation (\ref{eq:ensRot}) with
$\|\omega(\,\cdot\,,t)\|_\infty < \infty$
for all $t\in [0,T^*)$, i.e., the unique smooth solution $u$ exists for all $t\in [0,T^*)$. Then
$\omega \in C^\infty([0,T^*),\cS(\rset^3,\rset^3))$.
\end{thm}
\begin{proof}
The (vorticity formulation of the) Euler and the Navier--Stokes equations have a unique smooth solution $u,\omega\in C^\infty([0,T^*),C_b^\infty(\rset^3,\rset^3))$. Set $n=m=3$, $g=u$, $h=\nabla u$, and $k=0$ in (\ref{eq:generalPDE}). Then by \Cref{thm:main} we have a solution $f\in C^\infty([0,T^*),\cS(\rset^3,\rset^3))$. But by the uniqueness of $\omega$ we have $f=\omega$.
\end{proof}

A breakdown in $\|\omega\|_\infty$ provides $T^*<\infty$ \cite{beale84}. Now by \Cref{thm:ensSchwartz} a breakdown in any
$\|x^\alpha\cdot \partial^\beta \omega^\gamma\|_\infty$ or $\|x^\alpha\cdot \partial^\beta \omega^\gamma\|_{L^p(\rset^3)}$
with $p\in [1,\infty)$, $\alpha,\beta,\gamma\in\nset_0^3$, and $\omega^\gamma := \omega_1^{\gamma_1}\cdot \omega_2^{\gamma_2}\cdot \omega_3^{\gamma_3}$, $\gamma=(\gamma_1,\gamma_2,\gamma_3)\neq 0$, also provides $T^*<\infty$. By \Cref{rem:lowerDependencies} weaker conditions on $\omega_0$ are possible. Unfortunately, similar calculations as in \Cref{thm:burgerMoments} or \Cref{exm:momFinBreakDown} for Burgers' equation are not yet accessible to us for the Euler or Navier--Stokes equations. With $k = \rot F\in C^\infty([0,\infty),\cS(\rset^3,\rset^3))$ in \Cref{thm:main} we have that \Cref{thm:ensSchwartz} also holds with external forces.

\section*{Acknowledgments}

We thank Tarek Elgindi for valuable remarks and fruitful discussions on the paper. We thank Lorenzo Brandolese for valuable remarks, fruitful discussions, and for providing additional references.

The author and this project are financed by the Deutsche Forschungs\-gemein\-schaft DFG with the grant DI-2780/2-1 and his research fellowship at the Zukunfs\-kolleg of the University of Konstanz, funded as part of the Excellence Strategy of the German Federal and State Government.

{\addcontentsline{toc}{section}{References}

\end{document}